\newtheorem{theorem}{Theorem}
\newtheorem{definition}[theorem]{Definition}
\newtheorem{proposition}[theorem]{Proposition}
\newtheorem{lemma}[theorem]{Lemma}
\theoremstyle{remark}
\newtheorem{example}[theorem]{Example}
\newtheorem{remark}[theorem]{Remark}
\newcommand{\W}{\textsf W}
\renewcommand{\v}{\overrightarrow}
\renewcommand{\L}{{\tt{L}}}
\newcommand{\LL}{{\mathcal L}}
\newcommand{\Q}{{\mathbf Q}}
\newcommand{\N}{{\mathbf N}}
\newcommand{\Z}{{\mathbf Z}}
\renewcommand{\mod}{{\rm mod}}
\def\z{{\sf Z}}
\def\lcm{\mathrm{lcm}}
\def\N{\mathbb{N}}
\def\Z{\mathbb{Z}}
\def\Q{\mathbb{Q}}
\def\int{\mathrm{int}}
\renewcommand{\l}{{\tt L}}
\title{Union of sets of lengths of numerical semigroups}
\author{
J. I. Garc\'{\i}a-Garc\'{\i}a
 \footnote{
     Departamento de Matem\'aticas/INDESS (Instituto Universitario para el Desarrollo Social Sostenible),
     Universidad de C\'adiz, E-11510 Puerto Real  (C\'{a}diz, Spain).
     E-mail: ignacio.garcia@uca.es.}\\
D. Mar\'{\i}n-Arag\'on
 \footnote{
     Departamento de Matem\'aticas,
     Universidad de C\'adiz, E-11510 Puerto Real  (C\'{a}diz, Spain).
     E-mail: daniel.marin@uca.es.}
     \\
 A. Vigneron-Tenorio
 \footnote{
 	Departamento de Matem\'aticas/INDESS (Instituto Universitario para el Desarrollo Social Sostenible), Universidad de C\'adiz,
 	E-11406 Jerez de la Frontera (C\'{a}diz, Spain).
     E-mail: alberto.vigneron@uca.es.}
}
\date{\today}
\begin{document}

\maketitle

\begin{abstract}
Let $S=\langle a_1,\ldots,a_p\rangle$ be a numerical semigroup, $s\in S$ and $\z(s)$ its set of factorizations. The set of length is denoted by $\LL(s)=\{\L(x_1,\dots,x_p)\mid (x_1,\dots,x_p)\in\z(s)\}$ where $\L(x_1,\dots,x_p)=x_1+\ldots+x_p$. From these definitions, the following sets can be defined $\W(n)=\{s\in S\mid \exists x\in\z(s) \textrm{ such that  } \L(x)=n\}$, $\nu(n)=\cup_{s\in \W(n)} \LL(s)=\{l_1<l_2<\ldots< l_r\}$ and $\Delta\nu(n)=\{l_2-l_1,\ldots,l_r-l_{r-1}\}$. In this paper, we prove that the set  $\Delta\nu(S)=\cup_{n\in\N}\Delta\nu(n)$ is almost periodic with period $\lcm(a_1,a_p)$.
\end{abstract}

\section*{Introduction}
%importancia del tema
In many rings and semigroups, their elements can be written as a finite product (or sum) of other elements, but in general the factorizations are not unique, which is not what happens in the ring of integer numbers. The non-unique factorization theory describes and classifies these aspects using the invariants of the algebraic structure we are working with (see \cite{Chapman-Fontana-Geroldinger} for further background). From among these parameters we can highlight the $\omega$-primality, the tame-degree, the $\Delta$-set and the elasticity. What they try to measure, in a way or another, is how far is a semigroup or a ring from having unique factorizations, and if they are not unique they explain its behaviour. For example, if the $\Delta$-set of an element is the empty set, that means that the length of all its factorizations are the same. The computation of these parameters is always after a deep theoretical study because, in general, even if its definitions are not complicated, the computation for values not necessarily very high is not trivial and  it requires knowing some of its properties (bounds, periodicity, etc.) to be able to obtain effective algorithms for getting examples.

%antecedentes
In recent years, a type of structure where these parameters have been well studied are the numerical and affine semigroups. We highlight, for example, the library "NumericalSgps" made in gap \cite{gapnumerical} where there are implemented functions to compute some of these parameters. Following this line, we can emphasize, the following works \cite{DeltaSets}, \cite{wprimalidad}, \cite{catenary} and many of the references therein.

%definición del problema
In this work, we start from the definition of $\Delta$-set of the elements of a numerical semigroup and we define $\Delta$ of the union of a set of elements. Some papers where this parameter appears are the following: in \cite{Chapman-Smith} generalized sets of lengths are studied in Dedekind domains by Chapman and Smith and in \cite{Chapman-Smith-algebraic-number} its asymptotic behaviour is shown, in \cite{Amos-Chapman-Hine-Paixao} some properties of the set $\nu_n$ are obtained for numerical semigroups generated by an arithmetic progressions, in \cite{Baginski-Chapman-Hine-Paixao} the set $\Delta\nu(M)$ is computed for several monoids and the asymptotic behaviour of $\Delta\nu_n$ is also studied. This invariant has also been analyzed by Chapman, Freeze and Smith in \cite{Chapman-Freeze-Smith}. More recently, Geroldinger in \cite{set-of-lengths} made a survey for some parameters and proved using that $d=\min(\Delta(S))=\gcd\{a_{i+1}-a_i\mid i=1,\dots,p-1\}$, some results on the structure of $\nu_n$.  These sets are almost arithmetical progressions, and therefore  $\Delta\nu(S)\subset \{d,2d,3d,\dots\}$.

%objetivos
The main goal of this work is to give properties of the set of length of a numerical semigroup and to obtain algorithms who allow us to compute the function $\Delta\nu$. We prove that for its computation we do not need to calculate the $\Delta$-set of all the elements involved and therefore we  improve its computation in a remarkable way. We  also show that this function is almost periodic and we use this period and its bound for obtaining the function $\Delta\nu$ for any numerical semigroup. We provide some examples which illustate these algorithms. The software developed and all its examples can be downloaded in \cite{github}.

%estructura del trabajo
In Section \ref{seccion1} we give some basic definitions and introduce the notation that we use through this paper. Section \ref{seccion2} is devoted to explain the behaviour of the function $\Delta\nu$ and an improved algorithm for computing it is also given. Finally, in Section \ref{seccion3} we study the periodicity of $\Delta\nu$ and some examples are provided.

\section{Definitions and notations}\label{seccion1}

Denote by $\N$ the set on nonnegative integers. 
In this work $S$ denotes a primitive numerical monoid (or numerica semigroup). 
Since every numerical monoid is finitely generated, there exist $a_1,\dots, a_p\in\N$ such that $S=\langle a_1<\dots<a_p\rangle=\{\sum_{i=1}^p \lambda_i a_i \mid \lambda_1,\ldots ,\lambda_p\in \N\}$. 
If $M$ is the subgroup of $\Z^p$ defined by the equation $a_1x_1+\dots+a_px_p=0$ and $\sim_M$ is the equivalent relation on $\N^p$ defined by $z\sim_M z'$ if $z-z'\in M$, then the semigroup $S$ is isomorphic to the quotient $\N^p/\sim_M$.

Let $s$ be an element of $S$. If $(x_1,\dots,x_p)\in\N^p$ verifies that $\sum_{i=1}^p x_ia_i=s$, then we say that $(x_1,\dots,x_p)$ is a factorization of $s$. We denote by $\z(s)$ the set $\{(x_1,\dots,x_p)\in\N^p\mid a_1x_1+\dots+a_px_p=s\}$ and we call it the set of factorizations of $s$.

Define the linear function $\L:\Q^p\to \Q$ as $\L(x_1,\dots,x_p)=x_1+\dots+x_p$. 
The length of a factorization $x$ of $s\in S$ is the number $\L(x)$.

The following definition is found in {\cite{DeltaSets,DeltaNM-Chapman}}.

\begin{definition}
Given
$s\in S$ and $S=\langle a_1,\dots,a_p\rangle$, set
$
 \LL(s)=\{ \L(x_1,\dots,x_p)\mid (x_1,\dots,x_p)\in\z(s)
\}
$,
which is known as the set of lengths of $s$ in $S$.
Since $S$ is a numerical monoid, it is not hard to prove that
 this set of lengths is bounded, and so there exist  some positive integers
$l_1<\dots<l_k$ such that
$\LL(s)=\{l_1,\dots, l_k\}$. The set
\[
\Delta(s)=\{l_i-l_{i-1}: 2\leq i \leq k\}
\]
is known as the Delta set of $s$.\\

The set 
%We globalize the notion of the Delta set by setting
\[
\Delta(S)=\bigcup_{s\in S} \Delta(s)
\]
is called the Delta set of $S$.
\end{definition}

In \cite{DeltaSets}, it was proved that the function $\Delta:S\to \N$ is almost periodic.
The following definition is found in \cite{Amos-Chapman-Hine-Paixao, Baginski-Chapman-Hine-Paixao, set-of-lengths, Freeze-Geroldinger}.

\begin{definition}
	Let $S=\langle a_1,\dots,a_p\rangle$ and $n\in \N$.
	\begin{itemize}
		\item Define $\W(n)=\{s\in S\mid \exists x\in\z(s) \textrm{ such that  } \L(x)=n\}$.
		\item Define $\nu(n)=\cup_{s\in \W(n)} \LL(s)$
	\end{itemize}
	If $\nu(n)=\{l_1<l_2<l_3\dots<l_r\}$, then \[\Delta\nu(n)=\{l_2-l_1,l_3-l_2,\dots,l_r-l_{r-1}\}\]
	and \[\Delta\nu(S)=\cup_{n\in \N}\Delta\nu(n)\]
\end{definition}
Clearly, for every $n\in \N$ the set $\Delta\nu(n)$ is a subset of $\N$. Thus, for a $S$ numerical semigroup we define $\Delta\nu$ as follows:
\[
\begin{array}{rl}
\Delta\nu: & \N\to {\mathcal P}(\N)\\
 & n\to \Delta\nu(n)
\end{array}
\]

The main aim of this work is to prove that the above function is an almost periodic function and that its period is a divisor of $\lcm(a_1,a_p)$.

An unrefined method for computing $\Delta\nu(n)$ is the following:

\begin{algorithm}
\caption{Sketch of the algorithm to compute  $\Delta \nu(n)$.}\label{alg0}
\textbf{INPUT:} $S=\langle a_1,\dots,a_p \rangle$ a numerical semigroup and $n\in\N$.\\
\textbf{OUTPUT:} $\Delta\nu(n)$.
    \begin{algorithmic}[1]
        \State $A:=\{ (x_1,\dots,x_p)\mid \sum_{i=1}^p x_ia_i=n\}$.
        \State $\W(n):=\{ \sum_{i=1}^p x_ia_i\mid (x_1,\dots,x_p)\in A\}$.
        \State $\frak L=\cup_{s\in\W(n)}\LL(s)$.
        \State \Return $\Delta \frak L$.
    \end{algorithmic}
\end{algorithm}

The tuples $(n,0,0,\dots,0)$, $(n-1,1,0,\dots,0)$, \dots, $(0,n,0,\dots,0)$ are factorizations of different elements. So $\lim_{n \to + \infty} \#\W(n)=\infty$.

\begin{example}
    Let $S=\langle 5,9,11\rangle$ and $n=41$. The cardinality of $\W(41)$ is $123$ and for the computation of $\Delta \nu(41)$ using Algorithm \ref{alg0} it is necessary to know the factorizations of all of them. 
    In the following section, we prove that for any $n\in \N$ it is only necessary to calculate the factorizations of $111$ for computing $\Delta \nu (n)$.
    
    This number increase with $n$. For instance, if $n=50$ and $S=\langle 11,13,19\rangle$, this number is $255$, but Algorithm \ref{alg1} only need the computation of the factorizations of $111$ elements.
\end{example}

\section{Computation of $\Delta\nu(n)$}\label{seccion2}
In \cite{DeltaSets}, it is proved that there exists $\delta\in \N$ and a bound $N_S\in \N$ such that $\delta | lcm(a_1,a_p)$ and for every $s\in S$ with $s\geq N_S$ we have $\Delta(s+\delta)=\Delta(s)$.

It is straightforward to prove that $\min \W(n)=na_1$ y $\max\W(n)=na_p$. 
We use the notation of \cite{DeltaSets} and the elements $N_S$, $\vec w$, $\vec w'$ are defined as there.
We recall that explicitly these values are:
\[
d=\gcd\{a_{i+1}-a_i\mid i=1,\dots,p-1\},
\]
\[
S_i=
-\frac{a_2 \left(a_1 d \gcd \left(a_i-a_1,a_1-a_p,a_p-a_i\right)+(p-2) \left(a_1-a_i\right) \left(a_1-a_p\right)\right)}{\left(a_1-a_2\right) \gcd \left(a_i-a_1,a_1-a_p,a_p-a_i\right)},
\]
\[
S'_i=
\frac{a_{p-1} \left((p-2) \left(a_1-a_p\right) \left(a_p-a_i\right)-d a_p \gcd \left(a_i-a_1,a_1-a_p,a_p-a_i\right)\right)}{\left(a_{p-1}-a_p\right) \gcd \left(a_i-a_1,a_1-a_p,a_p-a_i\right)},
\]
\[
N_S=\lceil \max(\{S_i\mid i=2,\dots,p-1\}\cup\{S'_i\mid i=2,\dots,p-1\})\rceil,
\]
\[
\vec w=\frac{N_S(a_2-a_p)}{a_2(a_1-a_p)}e_1+\frac{N_S(a_1-a_2)}{a_2(a_1-a_p)}e_p-\frac{N_S}{a_1}e_1,
\]
\[
\vec w'=\frac{N_S(a_{p-1}-a_p)}{a_{p-1}(a_1-a_p)}e_1+\frac{N_S(a_1-a_{p-1})}{a_{p-1}(a_1-a_p)}e_p-\frac{N_S}{a_p}e_p.
\]

\begin{lemma}\label{lemaNprima}
	Let $S$ be a numerical semigroup and let $N_S$ the bound of \cite{DeltaSets}.
	There exists $N'_S\in \N$ such that for every $n\geq N'_S$ we have $\min \W(n)\geq N_S$. 
\end{lemma}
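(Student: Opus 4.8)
The plan is to reduce the statement to the identity $\min\W(n)=na_1$ recorded just above, after which only an elementary inequality remains. First I would confirm that identity: if $s\in\W(n)$, then $s$ has a factorization $(x_1,\dots,x_p)$ with $\L(x_1,\dots,x_p)=x_1+\dots+x_p=n$, and since $a_1=\min\{a_1,\dots,a_p\}$ we get
$s=\sum_{i=1}^p x_ia_i\geq a_1\sum_{i=1}^p x_i=na_1$. Conversely, $(n,0,\dots,0)$ is a factorization of $na_1\in S$ of length $n$, so $na_1\in\W(n)$; in particular $\W(n)\neq\emptyset$ and $\min\W(n)=na_1$ for every $n\in\N$.

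Now set $N'_S=\lceil N_S/a_1\rceil$. For every $n\geq N'_S$ we then have $\min\W(n)=na_1\geq N'_S\,a_1\geq N_S$, which is exactly the asserted inequality.

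There is essentially no obstacle here; the only points worth a word of care are that $\min\W(n)$ is well defined, which is guaranteed by $na_1\in\W(n)$, and the degenerate case $a_1=1$, where one may simply take $N'_S=N_S$. I would also note that since $n\mapsto na_1$ is (strictly) increasing, the conclusion $\min\W(n)\geq N_S$ persists for all $n\geq N'_S$; this monotonicity is what will later allow this lemma to be combined with the eventual $\delta$-periodicity of $\Delta$ on $\{s\in S\mid s\geq N_S\}$ from \cite{DeltaSets}.
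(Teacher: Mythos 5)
Your proof is correct and follows the same route as the paper: identify $\min\W(n)=na_1$ (which the paper states just before the lemma) and take $N'_S\geq N_S/a_1$, so that $n\geq N'_S$ gives $na_1\geq N_S$. Your justification of the identity $\min\W(n)=na_1$ is a welcome extra detail, but the argument is essentially identical to the paper's.
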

\begin{proof}
	The minimum of $\W(n)$ is equal to $n a_1$. It is enough to take $N'_S\geq \frac {N_S} {a_1}$.
\end{proof}

\begin{definition}{\cite[Definition 15]{DeltaSets}}\label{d:zetas}
	Let $S=\langle a_1,\dots, a_p\rangle$ be a numerical monoid.
	For every $s\in \N$ such that $s\geq N_S$, define
	\begin{itemize}
		\item $\z_1(s)$  the set of elements
		$x=(x_1,\dots,x_p)\in\z(s)$ verifying that $s/a_1+\l(\v w)< \l(x)\leq s/a_1$,
		\item $\z_2(s)$  the set of elements
		$x=(x_1,\dots,x_p)\in\z(s)$ verifying that $s/a_p+\l(\v w')-d\leq \l(x)\leq s/a_1+\l(\v w)+d$,
		\item $\z_3(s)$  the set of elements
		$x=(x_1,\dots,x_p)\in\z(s)$ verifying that $s/a_p\leq \l(x)< s/a_p+\l(\v w')$.
	\end{itemize}
\end{definition}

Note that $\L(\vec w)=\frac {(a_1-a_2)N_S}{a_1a_2}$ and
$\L(\vec w')=\frac {(a_p-a_{p-1})N_S}{a_pa_{p-1}}$.
Let $C_i$ be the following values:
\[C_1= \frac{\left(a_p-a_{p-1}\right) N_S}{a_{p-1}},~C_2=\frac{\left(a_1-a_2\right) N_S}{a_2},\]
\[C_3=\left(-\frac{a_p}{a_1}+\frac{a_p}{a_2}-\frac{a_p}{a_{p-1}}+1\right) N_S,
C_4=\left( \frac{a_1}{a_{p-1}}-\frac{a_1}{a_p}-\frac{a_1}{a_2}+1\right) N_S. \]

Define $\lambda_1=\max(C_1,C_4)$ and $\lambda_2=-\min(C_2,C_3)$.

\begin{proposition}\label{proplambda}
    For every $n\geq N_0=\max(\frac {N_S} {a_1},\frac{a_p-a_1+\lambda_1 + \lambda_2}{a_p-a_1})$ we have
    \[\Delta\nu(n)=\Delta ( \cup \{\z(x)| x \in [na_1,na_1+\lambda_1]\cup [na_p-\lambda_2,na_p]\} ).\]
\end{proposition}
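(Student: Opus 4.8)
The plan is to show the two-sided inclusion between $\Delta\nu(n)$ and the Delta set of the union of factorizations of the elements lying in the two short intervals $[na_1,na_1+\lambda_1]$ and $[na_p-\lambda_2,na_p]$ at the two ends of $\W(n)$. The heuristic is that $\nu(n)$ is an almost arithmetic progression (by the results quoted from \cite{set-of-lengths}), and all the ``interesting'' gaps in it — the places where the progression is not yet periodic — occur near its two extremes; the bulk in the middle contributes only the constant difference $d$, which is already witnessed at the ends. So once $n$ is large enough that the ``transient'' zones $\z_1,\z_3$ from Definition \ref{d:zetas} attached to the extremal elements $na_1$ and $na_p$ of $\W(n)$ are genuinely contained in the prescribed intervals, nothing outside those intervals can create a new difference.

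First I would recall that $\min\W(n)=na_1$ and $\max\W(n)=na_p$, and that every $s\in\W(n)$ with $s\ge N_S$ has $\nu$-relevant lengths controlled, via Definition \ref{d:zetas}, by the three zones $\z_1(s),\z_2(s),\z_3(s)$; by Lemma \ref{lemaNprima}, taking $n\ge N_S/a_1$ guarantees $s\ge N_S$ for all $s\in\W(n)$. The second step is to locate, in terms of lengths, where the elements of the intervals $[na_1,na_1+\lambda_1]$ and $[na_p-\lambda_2,na_p]$ sit: the constants $C_1,\dots,C_4$ and hence $\lambda_1,\lambda_2$ are engineered precisely so that $na_1+\lambda_1$ is large enough for its factorization lengths to overlap the top of the $\z_1/\z_2$ range associated with $na_1$ and with the generic middle behaviour, and symmetrically $na_p-\lambda_2$ reaches down past the bottom of the $\z_2/\z_3$ range associated with $na_p$. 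Concretely one checks, using $\L(\vec w)=(a_1-a_2)N_S/(a_1a_2)$ and $\L(\vec w')=(a_p-a_{p-1})N_S/(a_pa_{p-1})$, that the length windows covered by these two intervals, together, already include every length that can appear as an element of $\nu(n)$ — and moreover include a contiguous (step-$d$) run in the overlap region, which forces the middle of $\nu(n)$ to contribute no difference other than $d$.

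For the inclusion $\supseteq$: any element $x$ in one of the two intervals is a factorization of some $s\in\W(n)$ (since $na_1\le \sum x_ia_i\le na_p$ and we are inside $\W(n)$'s range, though one must argue membership — this is where I would be careful, using that for $n\ge N_0$ these intervals are nonempty subsets of $\W(n)$), so its length lies in $\LL(s)\subseteq\nu(n)$; hence consecutive differences among these lengths are differences of consecutive elements of a subset of $\nu(n)$, and with the contiguity established in step two they are exactly the consecutive differences of $\nu(n)$. For $\supseteq$ the real content is that no gap of $\nu(n)$ is lost by restricting to the intervals, which again follows from the overlap being a full step-$d$ run. For the inclusion $\subseteq$: take $l_{i}-l_{i-1}$ a consecutive difference in $\nu(n)=\{l_1<\dots<l_r\}$; if both $l_{i-1},l_i$ come from lengths realized inside the two intervals we are done, and if not, then the ``missing'' stretch lies in the periodic middle portion where, by the structure theorem for $\nu_n$, every difference equals $d$, and $d$ is already realized as a consecutive difference within the (contiguous) overlap zone of the two intervals.

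The main obstacle I expect is the bookkeeping of step two: verifying that the explicit constants $\lambda_1=\max(C_1,C_4)$ and $\lambda_2=-\min(C_2,C_3)$, together with the threshold $N_0=\max\bigl(N_S/a_1,(a_p-a_1+\lambda_1+\lambda_2)/(a_p-a_1)\bigr)$, really do make the length windows of the two intervals overlap in a step-$d$ run that covers the whole ``transient'' behaviour of $\nu(n)$ at both ends. This amounts to translating the $\z_1,\z_2,\z_3$ length bounds of Definition \ref{d:zetas} — stated for a fixed $s$ in terms of $s/a_1,s/a_p,\L(\vec w),\L(\vec w')$ — into bounds on the argument $s$ when the length is pinned near $na_1$ or $na_p$, and then checking the inequalities $C_i$ encode exactly the slack needed. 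The rest (nonemptiness of the intervals as subsets of $\W(n)$ for $n\ge N_0$, and the reduction of the middle to step-$d$) is routine given the quoted results.
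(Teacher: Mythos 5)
Your overall strategy coincides with the paper's: split $\W(n)$ into the two end intervals and the middle, use Lemma \ref{lemaNprima} so that every $s\in\W(n)$ exceeds $N_S$, and argue via the zones $\z_1,\z_2,\z_3$ of Definition \ref{d:zetas} that the middle contributes only $\{d\}$. But what you have written is a plan rather than a proof: the step you yourself flag as ``the main obstacle'' --- translating the length bounds of Definition \ref{d:zetas} into explicit conditions on a middle element $x\in\W(n)$ and checking that $\lambda_1,\lambda_2$ encode exactly the needed slack --- is precisely the entire content of the paper's argument. There one writes down the four inequalities $\frac{x}{a_p}>\frac{na_1}{a_p}+\L(\vec w')$, \ $\frac{x}{a_1}<\frac{na_p}{a_1}+\L(\vec w)$, \ $\frac{x}{a_p}+\L(\vec w')<n+\L(\vec w)$, \ $\frac{x}{a_1}+\L(\vec w)>n+\L(\vec w')$, verifies that together they amount to $na_1+\lambda_1<x<na_p-\lambda_2$ (this is where the explicit $C_1,\dots,C_4$ enter), and deduces from them that the lengths of $\z_1(x)$ and $\z_3(x)$ fall inside the step-$d$ bands attached to $na_1$ and $na_p$, while the transient lengths of $\z_1(na_1)$ and $\z_3(na_p)$ fall inside the band of $\z_2(x)$. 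None of this is carried out in your proposal, so the claimed equality of Delta sets is not actually established.

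Two further points. First, your assertion that the length windows of the two end intervals ``already include every length that can appear as an element of $\nu(n)$'' is not the relevant statement (and, read as a statement about realized lengths, is false in general): a length realized only by factorizations of middle elements need not be realized by any element of the end intervals. What must be shown is that every such length lies in a region where both $\nu(n)$ and the set of end-interval lengths advance in steps of exactly $d$, so that dropping the middle neither creates nor destroys a gap --- and this is again the containment argument above. Second, the role of the second term in $N_0$ is not ``nonemptiness of the intervals as subsets of $\W(n)$'' (the end intervals contain $na_1$ and $na_p$, so that is automatic); it guarantees that the open middle range $(na_1+\lambda_1,\,na_p-\lambda_2)$ contains an element of $\W(n)$, exhibited in the paper as $na_1+k(a_p-a_1)=(n-k)a_1+ka_p$, which is what allows one to conclude that the middle contributes exactly $\{d\}$.
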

\begin{proof}
Let $n\geq N_0$, by Lemma \ref{lemaNprima} we obtain that $x\geq N_S$ for all $x\in \W(n)$.\\

Using the properties of the sets $\z_i$ (Definition \ref{d:zetas}), for every $x\in \W(n)$ with $x\geq N_0$ there exists $c_1\in \z_1(x)$ such that $\L(c_1)=\min\{\L(x)\mid x\in \z_1(x)\}$ and $b_1\in \z_1(x)$ such that $\L(b_1)=\max\{\L(x) \mid x\in \z_1(x)\}$. We have that $\L(b_1)\leq \frac x {a_1}$ and that $\frac x {a_1} +\L(\vec w)\leq \L(c_1)$.
Analogously, 
there exists $c_2\in \z_3(x)$ such that $\L(c_2)=\min\{\L(x)\mid x\in \z_3(x)\}$ and $b_2\in \z_3(x)$ such that $\L(b_2)=\max\{\L(x) \mid x\in Z_3(x)\}$. Thus, $\frac x {a_p} \leq \L(c_2)$ and $\L(b_2) \leq \frac x {a_2} +\L(\vec w')$.\\

The following system of inequalities is obtained:
\begin{equation}\label{e1}
\frac x {a_p}>\frac {na_1} {a_p}+\L(\vec w'),
\end{equation}
\begin{equation}\label{e2}
\frac x {a_1}< \frac {na_p}{a_1}+\L(\vec w),
\end{equation}
\begin{equation}\label{e3}
\frac x {a_p} +\L(\vec w')<n+\L(\vec w),
\end{equation}
\begin{equation}\label{e4}
\frac x {a_1}+\L(\vec w)>n+\L(\vec w').
\end{equation}
These inequalities are summarized as follows:
\begin{equation}\label{ec5}
{na_1} + \lambda_1<x< {na_p}-\lambda_2. 
\end{equation}
If (\ref{e1}) y (\ref{e3}) are satisfied, then we get $\L(\z_1(x))\subset \L(\z_2(na_1))=\{d\}$. With (\ref{e2}) y (\ref{e4}), we obtain $\L(\z_3(x))\subset \L(\z_2(na_p))=\{d\}$.
From (\ref{e3}) y (\ref{e4}), we get $\L(\z_1(na_1))\subset \L(\z_2(x))$ y $\L(\z_3(na_p))\subset \L(\z_2(x))=\{d\}$. 
Finally, $\L(\z_1(x)\cup \z_3(x))\subset  \L(\z_2(na_1)\cup \z_2(na_p))=\{d\}$ and $\L(\z_1(na_1)\cup\z_3(na_p)\subset \L(\z_2(x))=\{d\}$.
Therefore, if there exists a solution of (\ref{ec5}), we obtain that $\Delta ( \cup \{\z(x)| x \in (na_1+\lambda_1,na_p-\lambda_2) \} )=\{d\}$.

To finish the proof, we now prove the existence of solutions of (\ref{ec5}).
Note that there exists $n$ such that $na_p-\lambda_2>na_1+\lambda_1$ and  $(na_p-\lambda_2)-(na_1+\lambda_1)>a_p-a_1$.
In this way there exists $k\in \N$ with $k\leq n$ such that  $na_1+\lambda_1<na_1+k(a_p-a_1)<na_p-\lambda_2$ and the element $na_1+k(a_p-a_1)$ belongs to $\W(n)$. 
This is fulfilled if $(na_p-\lambda_2)-(na_1+\lambda_1)>a_p-a_1$ which is satisfied if and only if 
\[n>\frac{a_p-a_1+\lambda_1 + \lambda_2}{a_p-a_1}.\]
Thus, we assert that there exists $x\in\W(n)$ satisfying (\ref{ec5}).
\end{proof}

\begin{figure}
\includegraphics[width=1.1\textwidth]{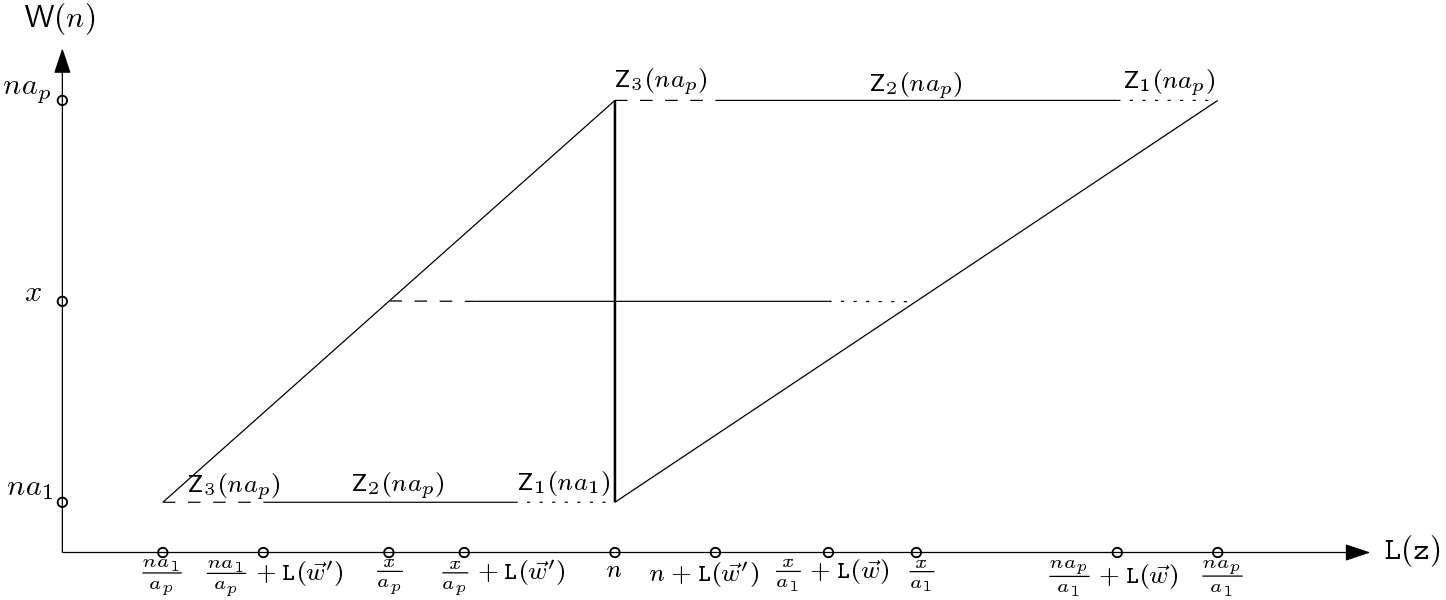}
\caption{Representation of the lenghts of the elements of $\W(n)$.}
\end{figure}

With the notation of the above proposition, we give the following definitions.

\begin{definition}
	Let $n\geq N_0$. Consider three zones in $\nu(n)$: $B_3(n)$, $B_2(n)$ y $B_1(n)$. Where
	$B_3(n)=\{x\in \nu(n) | x< \frac{na_1+\lambda_1} {a_p} \}$, 
	$B_1(n)=\{x\in \nu(n) | x> \frac{na_p-\lambda_2} {a_1} \}$  and 
	$B_2(n)=\nu(n)\setminus(B_1\cup B_3)$. 
\end{definition}

\begin{remark}\label{r1}
	From the construction given in \ref{proplambda}, We have that $\Delta\nu(n)=\Delta B_1(n)\cup \Delta B_2(n) \cup \Delta B_3(n)$ and $\Delta B_2(n)=\{d\}$. 
\end{remark}

\begin{algorithm}[H]
    \caption{Sketch of the algorithm to compute  $\Delta \nu(n)$.}\label{alg1}
    \textbf{INPUT:} $S=\langle a_1,\dots,a_p \rangle$ a numerical semigroup and $n\in\N$.\\
    \textbf{OUTPUT:} $\Delta\nu(n)$.
	\begin{algorithmic}[1]
	    \State $d:={\rm gdc}(a_2-a_1,\dots,a_p-a_{p-1})$.
	    \State Compute $N_S$ as in \cite[\S 3]{DeltaSets}.
	    \State $C_1 := \frac{\left(a_p-a_{p-1}\right) N_S}{a_{p-1}}$, $C_2 := \frac{\left(a_1-a_2\right) N_S}{a_2}$,\\
	    $C_3:=\left(-\frac{a_p}{a_1}+\frac{a_p}{a_2}-\frac{a_p}{a_{p-1}}+1\right) N_S$, $C_4:=\left( \frac{a_1}{a_{p-1}}-\frac{a_1}{a_p}-\frac{a_1}{a_2}+1\right) N_S$.
	    \State $\lambda_1:=\max(C_1,C_4)$, $\lambda_2:=-\min(C_2,C_3)$.
        \State Compute $N_0$ as in Proposition \ref{proplambda}.
	    \If {$n\leq N_0$}
	        \State Compute $\Delta \nu(n)$ using Algorithm \ref{alg0}.
	        \State \Return $\Delta \nu(n)$.
	    \EndIf

	    \State $x_1:=na_1+\lceil\lambda_1\rceil$.
	    \State $x_2:=na_p-\lfloor\lambda_2\rfloor$.
	    \State $W_3:=\W(n)\cap [na_1,x_1]$.
	    \State $B_3(n):=\{x\in \cup_{s\in W_3} \LL(s) \mid x\leq\frac{x_1}{a_p}\}$.
	    \State $W_1:=\W(n)\cap [x_2,na_p]$.
	    \State $B_1(n):=\{x\in \cup_{s\in W_1} \LL(s) \mid x\geq\frac{x_2}{a_1}\}$.
	    \State Compute $\Delta B_3(n)$.
	    \State Compute $\Delta B_1(n)$.
	    \State Return $\Delta B_3(n)\cup\{d\}\cup\Delta B_1(n)$.
	\end{algorithmic}
\end{algorithm}

\begin{example}
	Let $S$ be the numerical semigroup generated by $\langle 4,9,10,15 \rangle$. In this case $N_0=73$, this means that if we compute $\Delta\nu(n)$ with $n$ greater than it, for example $n=130$, we can save a lot of computations. In this case, $W(130)\subset[520,1950]$, $\lambda_1=203$, $\lambda_2=759$, $x_1=723$ and $x_2=1191$. Therefore, using Algorithm \ref{alg1} we have 468 values of $n$ that we can skip.
	
	The good part of this algorithm is that even if we increase the value of $n$, we only have to compute the same number of elements. For instance for $n=150$, $W(150)\subset [600,2250]$, but since $\lambda_1$ and $\lambda_2$ do not depend on $n$, we save $688$ evaluations.
\end{example}

\section{Periodicity of $\Delta\nu_{\_}:\N\to {\mathcal P}(\N)$}\label{seccion3}
The main result of this work is presented in this section. This result allows us to give some example where we compute the function $\Delta\nu$ for some numerical semigroups.

\begin{proposition}\label{periodo}
Let $n\geq N_0$. Then, 	$\Delta B_1(n)=\Delta B_1(n+ \mu a_1 )$, $\Delta B_3(n)=\Delta B_3(n+ \mu a_p )$ and $\Delta B_2(n)=\Delta B_2(n+ \mu a_i )$ for all $i\in\{1,\dots,p\}$ and for all $\mu\in \N$.

\end{proposition}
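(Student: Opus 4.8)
The plan is to establish the three periodicity statements separately, exploiting that each of $B_1(n)$, $B_2(n)$, $B_3(n)$ is governed by a different "end" of the set $\W(n)$ and that the parameters $\lambda_1,\lambda_2,d$ are independent of $n$. For $\Delta B_2(n)$, this is immediate: by Remark \ref{r1} we have $\Delta B_2(n)=\{d\}$ for every $n\geq N_0$, and since $N_0$ is chosen so that all relevant elements exceed $N_S$, the same holds for $n+\mu a_i$ (note $n+\mu a_i\geq n\geq N_0$); hence $\Delta B_2(n)=\{d\}=\Delta B_2(n+\mu a_i)$ for every $i$ and every $\mu\in\N$.

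For $\Delta B_1(n)$, the key observation is that the "top" of $\W(n)$ behaves translation-invariantly: if $x\in\W(n)$ then $x+\mu a_1\in\W(n+\mu a_1)$, because a factorization of $x$ of length $n$ can be augmented by $\mu$ copies of the generator $a_1$ to produce a factorization of $x+\mu a_1$ of length $n+\mu$. First I would show that the relevant window $[x_2,na_p]$ where $x_2=na_p-\lfloor\lambda_2\rfloor$ translates exactly to $[(n+\mu a_1)a_p-\lfloor\lambda_2\rfloor,\ (n+\mu a_1)a_p]$ only under translation by $\mu a_1 a_p$, so one must be more careful: the right endpoint $na_p$ shifts by $\mu a_1 a_p$, not $\mu a_1$. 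The correct bookkeeping is that $B_1(n)$ consists of lengths $\ell$ of factorizations of elements $s$ near $na_p$ with $\ell\geq x_2/a_1$; adding $\mu a_1$ to $n$ shifts each such $s$ by $\mu a_1$ (via the map above) and shifts each length $\ell$ by $\mu a_1/a_p$... which need not be an integer. I would instead argue directly at the level of length-sets: using Proposition \ref{proplambda} and Definition \ref{d:zetas}, $B_1(n)$ is determined by $\L(\z_3(s))$ for $s$ in the top zone, and the structure theory from \cite{DeltaSets} (specifically that $\Delta(s+\delta)=\Delta(s)$ and the translation behaviour of the $\z_i$) shows that shifting $n$ by a multiple of $a_1$ reproduces the same set of successive differences. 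The cleanest route is: show $\nu(n)\cap[x_2/a_1,\infty)$ and $\nu(n+\mu a_1)\cap[\text{corresponding bound},\infty)$ differ by an additive shift (by $\mu$, since lengths add $\mu$ when $s$ gains $\mu a_1$), hence have identical Delta sets. The symmetric argument with $a_p$ in place of $a_1$ — augmenting factorizations by copies of $a_p$, which adds $\mu a_p$ to $s$ and $\mu$ to the length — handles $\Delta B_3(n)=\Delta B_3(n+\mu a_p)$.

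The main obstacle is making the "additive shift of length-sets" rigorous near the boundary of the zone: one must check that the truncation defining $B_1(n)$ (the condition $\ell\geq x_2/a_1$, with $x_2=na_p-\lfloor\lambda_2\rfloor$) is compatible with the shift, i.e. that no lengths are spuriously gained or lost at the cut when passing from $n$ to $n+\mu a_1$. This requires that the cut-off lands in the region where $\Delta B_2=\{d\}$ already governs behaviour, so that a length entering or leaving the $B_1$-window across the boundary does not change $\Delta B_1$ beyond what $d$ contributes — which is exactly what the careful choice of $\lambda_1,\lambda_2$ and $N_0$ in Proposition \ref{proplambda} guarantees. I would therefore spend most of the proof verifying this boundary compatibility, after which the periodicity statements follow from the elementary fact that adding a constant to every element of a finite set of integers leaves its consecutive-difference set unchanged.
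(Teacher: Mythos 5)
Your handling of $\Delta B_2$ is correct and identical to the paper's, and your overall strategy for $B_1$ and $B_3$ (translate the two extreme zones, exploiting that $\lambda_1,\lambda_2,d$ do not depend on $n$) is the right one. The gap is in the bookkeeping of the shifts, and it is not cosmetic: the correspondence you settle on does not land in the right set. If $s\in\W(n)$ and you add $\mu$ copies of $a_1$ to a factorization of length $n$, you get an element $s+\mu a_1$ of $\W(n+\mu)$, \emph{not} of $\W(n+\mu a_1)$, so your ``cleanest route'' --- that $\nu(n)$ and $\nu(n+\mu a_1)$ agree above the cut up to an additive shift by $\mu$ --- cannot hold. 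You can see this from the extremes: $\max\nu(n)=na_p/a_1$ moves by $\mu a_p$, not by $\mu$, when $n$ is replaced by $n+\mu a_1$, and likewise the cut $x_2/a_1=(na_p-\lambda_2)/a_1$ moves by $\mu a_p$. Your intermediate worry that the induced shift of lengths is the non-integer $\mu a_1/a_p$ stems from the same misidentified map; with the correct map the shift is an integer, and that is precisely why $B_1$ has period $a_1$ and $B_3$ has period $a_p$.

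The correct correspondence (the mechanism the paper's proof relies on, even though its own write-up is loose with these constants) is: for $B_1$ and the period $a_1$, send $s\mapsto s+\mu a_1a_p$; membership in $\W(n+\mu a_1)$ is witnessed by adding $\mu a_1$ copies of $a_p$ to a length-$n$ factorization of $s$, while the long factorizations producing $B_1(n)$ (those in $\z_1(s)$ for $s\in[na_p-\lambda_2,na_p]$) are moved by adding $\mu a_p$ copies of $a_1$, so the relevant lengths shift by exactly $\mu a_p$, matching the displacement of the cut $x_2/a_1$. Symmetrically, for $B_3$ and the period $a_p$, elements shift by $\mu a_1a_p$ (add $\mu a_p$ copies of $a_1$ to the length-$n$ witness) and the short lengths shift by $\mu a_1$ (add $\mu a_1$ copies of $a_p$), again matching the moved cut $(na_1+\lambda_1)/a_p$. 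With this in hand, one inclusion of the equality of Delta sets is immediate, since translating a set of integers does not change its consecutive differences; the reverse inclusion still needs an argument that no new length appears strictly between two translated consecutive ones, which is done by pulling such a hypothetical length back --- subtracting the added coordinates, legitimate because near the extremes of $\W(n)$ factorizations are concentrated on $e_1$ (resp.\ $e_p$). Your ``boundary compatibility'' paragraph gestures at this last point but cannot be carried out with the shift by $\mu$ you propose, so as written the proofs of $\Delta B_1(n)=\Delta B_1(n+\mu a_1)$ and $\Delta B_3(n)=\Delta B_3(n+\mu a_p)$ are incomplete.
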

\begin{proof}
If $i=2$, then $\Delta B_2(n)=\Delta B_2(n+\mu a_i)=\{d\}$ for every $n\geq N_0$.

Take $i=3$. 
Let $x\in \Delta B_3(n)$. 
There exist $s_1,s_2\in [na_1,na_1+\lambda_1]\cap \W(n)$ and 
$z_1\in \z_3(s_1)$ and $z_2\in \z_3(s_2)$ fulfilling that $\L(z_1)-\L(z_2)=x$ there is no $z\in \nu(n)$ such that $\L(z_2)<\L(z)<\L(z_3)$.
Let $\tilde s_1=s_1+\mu a_p$ and $\tilde s_2=s_2+\mu a_p$. We have that $z_1+\mu e_p\in \z(\tilde s_1)$ and $z_2+\mu e_p\in \z(\tilde s_2)$ satisfying $\L(\tilde z_1)-\L(\tilde z_2)=x$. 
Furthermore, $\tilde s_1,\tilde s_2$ belong to $[na_1+\mu a_p,na_1+\lambda_1+\mu a_p]\cap \W(n+\mu a_p)$.

If there is an element $\tilde s\in \W(n+\mu a_p)$ with $\tilde z\in \z(\tilde s)$ such that $\L(\tilde z_2)<\L(\tilde z)<\L(\tilde z_3)$, when we consider the element $\tilde s-\mu a_p$ we obtain that such element has a factorization $z$ which verifies $\L(z_2)<\L(z)<\L(z_3)$ and this is a contradiction. So we have prove that $\Delta B_3(n)\subset \Delta B_3(n+\mu a_p)$. In the same way, the other inclusion can be proven so $\Delta B_3(n)= \Delta B_3(n+\mu a_p)$.

For $ i = 1 $, the demonstration is analogous.

%Si existiera $\tilde s\in \W(n+\mu a_p)$ con $\tilde z\in \z(\tilde s)$ tal que $\L(\tilde z_2)<\L(\tilde z)<\L(\tilde z_3)$, al considerar el elemento $\tilde s-\mu a_p$ obtenemos que dicho elemento tiene una factorización $z$ verificando que $\L(z_2)<\L(z)<\L(z_3)$ lo cual es una contradicción. Con esto queda probado que $\Delta B_3(n)\subset \Delta B_3(n+\mu a_p)$. Análogamente, puede probarse que la otra inclusión también es cierta con lo que $\Delta B_3(n)= \Delta B_3(n+\mu a_p)$.

%Para $i=1$, la demostraciones es análoga.
\end{proof}

\begin{theorem}
Let $S$ be a numerical semigroup. The function $\Delta\nu :\N\to {\mathcal P}(\N)$ is almost periodic with period $\delta=\lcm(a_1,a_p)$. A bound from which this function is periodic is $N_0$.
\end{theorem}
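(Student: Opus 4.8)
The plan is to derive the theorem as an essentially immediate consequence of Proposition~\ref{periodo}, using the decomposition $\Delta\nu(n)=\Delta B_1(n)\cup\{d\}\cup\Delta B_3(n)$ recorded in Remark~\ref{r1} (valid for $n\ge N_0$). The point is that Proposition~\ref{periodo} gives periodicity of each piece separately, but with \emph{different} periods: $\Delta B_1$ is $a_1$-periodic and $\Delta B_3$ is $a_p$-periodic (beyond $N_0$), while $\Delta B_2\equiv\{d\}$ is constant. To get a single period for the whole function one takes the least common multiple of the two periods, $\delta=\lcm(a_1,a_p)$.

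First I would fix $n\ge N_0$ and set $\delta=\lcm(a_1,a_p)$, writing $\delta=\mu_1 a_1=\mu_3 a_p$ for suitable $\mu_1,\mu_3\in\N$. Applying Proposition~\ref{periodo} with $i=1$ and the multiplier $\mu_1$ yields $\Delta B_1(n)=\Delta B_1(n+\delta)$; applying it with $i=3$ and the multiplier $\mu_3$ yields $\Delta B_3(n)=\Delta B_3(n+\delta)$; and $\Delta B_2(n)=\{d\}=\Delta B_2(n+\delta)$ holds trivially. Note that $n+\delta\ge N_0$ as well, so the decomposition of Remark~\ref{r1} applies at $n+\delta$ too. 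Taking the union of the three equalities gives
\[
\Delta\nu(n)=\Delta B_1(n)\cup\{d\}\cup\Delta B_3(n)=\Delta B_1(n+\delta)\cup\{d\}\cup\Delta B_3(n+\delta)=\Delta\nu(n+\delta),
\]
which is exactly the assertion that $\Delta\nu$ is periodic with period $\delta$ from $N_0$ onwards. Finally, an easy induction on $k$ extends this to $\Delta\nu(n)=\Delta\nu(n+k\delta)$ for every $k\in\N$ and every $n\ge N_0$, establishing almost periodicity with the stated bound $N_0$.

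There is essentially no serious obstacle here: the heavy lifting has already been done in Proposition~\ref{proplambda} (reducing $\Delta\nu(n)$ to finitely many factorization computations near the two ends $na_1$ and $na_p$) and in Proposition~\ref{periodo} (the shift argument on the individual zones). The only things to be careful about are bookkeeping ones: checking that the hypothesis $n\ge N_0$ propagates under the shift $n\mapsto n+\delta$ so that Remark~\ref{r1} remains applicable, and confirming that $\delta$ is a common multiple of both relevant periods so that a single shift handles all three zones simultaneously. One should also note, as the theorem already remarks via Proposition~\ref{proplambda} and \cite{DeltaSets}, that the true minimal period divides $\lcm(a_1,a_p)$, so the statement is stated with the (not necessarily minimal) period $\delta=\lcm(a_1,a_p)$, which is all that is needed.
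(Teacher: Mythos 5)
Your proposal is correct and follows essentially the same route as the paper: decompose $\Delta\nu(n)$ via Remark~\ref{r1} into the three zones, apply Proposition~\ref{periodo} to get $a_1$-periodicity of $\Delta B_1$ and $a_p$-periodicity of $\Delta B_3$ beyond $N_0$ (with $\Delta B_2\equiv\{d\}$), and pass to the common period $\lcm(a_1,a_p)$. In fact your write-up is more careful than the paper's own terse argument, which even contains a slip (it attributes period ``$a_3$'' to $B_1$).
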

\begin{proof}
From Proposition  \ref{periodo}, $\Delta B_2(n)=\{d\}$. On the other hand, $B_1$ and $B_3$ are periodics with period $a_3$ and $a_1$, respectively, so $\Delta B_1$ and $\Delta B_3$ has the same period. Now we use that $\Delta\nu(n)=\Delta B_1(n)\cup \Delta B_2(n)\cup \Delta B_3(n)$ in order to obtain $\Delta \nu$ has period $\lcm(a_1,a_p)$.
%De la Proposición \ref{periodo}, $\Delta B_2(n)=\{d\}$. Por otro lado como $B_1$ y $B_3$ son períodicas con período $a_3$ y $a_1$, respectivamente, $\Delta B_1$ y $\Delta B_3$ tienen el mismo período. Usando ahora que $\Delta\nu(n)=\Delta B_1(n)\cup \Delta B_2(n)\cup \Delta B_3(n)$, obtenemos que $\Delta \nu$ tiene período $\lcm(a_1,a_p)$.
\end{proof}

Finally we illustrate the results of this work with some examples.
In these examples we show how we can compute $\Delta\nu(n)$ for several semigroups for all values of $n$. In order to get them, we have used a supercomputer \cite{super} checking the tree of numerical semigroups, in a parallel way, ordering the numerical semigroups by its genus and examining them. We discard the semigroups such that they are of the form $\langle m,m+k,\ldots,m+qk\rangle$ with $k,q\in\N$ since they are studied in \cite{Amos-Chapman-Hine-Paixao}.

\begin{example}
Here we have a collection of numerical semigroups with non-constant $\Delta\nu$.
\begin{itemize}
	    \item It i quite easy to find semigroups such that its $\Delta\nu$ has constant periodic part. For example, let $S$ be the semigroup $\langle 3,10,11\rangle$, we have that $N_0=82$, and $\delta=33$. Therefore we only have to compute the first $115$ values of $\Delta\nu$ in order to know all its values. After making this computations, we have the following results: $\Delta\nu(1)=\emptyset$, $\Delta\nu(2)=\Delta\nu(3)=\Delta\nu(4)=\Delta\nu(7)=\{1,2\}$ and $\Delta\nu(n)=\{1\}$ for  $n\in \{5,6\}\cup [8,33]$. So the real periodicity of this function is $1$, and because of this, if $n\geq 34$, $\Delta\nu(n)=\{1\}$. More semigroups having $\Delta \nu$ with this behaviour are: $\langle 10,13,15\rangle$, $\langle 4,7,9\rangle$ and $\langle 6,8,9,11 \rangle$.
	    \item A more interesting semigroup is the following one. If $S=\langle 3,10,14 \rangle$, we only need to compute $102$ values of $\Delta\nu$ since $N_0=60$ and $\delta=42$. The results are: \[\emptyset, \{1,4\}, \{1,3,4\}, \{1,3\}, \{1,3\}, \{1,4\}, \{1,2\}, \{1,3\}, \{1,4\}, \{1,2\},
	    \ldots.\] 
	    If $n\in [5,59]$, we have that $\Delta\nu(n)=\{1,4\}$ if $n\equiv 0\ \mod\ 3$, $\Delta\nu(n)=\{1,2\}$ if $n\equiv 1\ \mod\ 3$ and  $\Delta\nu(n)=\{1,3\}$ if $n\equiv 2\ \mod\ 3$.
	    If $n\geq 60 $, $\Delta\nu(n)=\{1,2\}$ if $n\equiv 0\ \mod\ 3$, $\Delta\nu(n)=\{1,3\}$ if $n\equiv 1\ \mod\ 3$ and  $\Delta\nu(n)=\{1,4\}$ if $n\equiv 2\ \mod\ 3$.
        The other values are $\Delta\nu(1)=\emptyset$, $\Delta\nu(2)=\{1,4\}$, $\Delta\nu(3)=\{1,3,4\}$,  and  $\Delta\nu(4)=\{1,3\}$. 
	    Hence, the real period is just 3.
        Another examples with with non-constant periodic part are 
	    $\langle 5,12,16 \rangle$, $\langle 6,13,17 \rangle$, $\langle 10,17,21 \rangle$, $\langle 17,24,28 \rangle$ and $\langle 4,9,10,15\rangle$.
	\end{itemize}
\end{example}

Thanks to our software (available in \cite{github}) its no difficult to obtain semigroups with non-constant $\Delta\nu$ and even with non-constant periodic part. This software has been developed in C++ for obtaining the maximum speed. However, in order to provide a friendly interface, we made an interface for Python3 and IPython3 (see \cite{python}) notebooks using swing (see \cite{swing}). Therefore, the user can load our library in a Jupyter notebook and use its Python functions which actually calls to our pre-compiled functions in C++, mixing the efficiency of C++ with the user-friendly Python.
%Gracias a nuestro software observamos que no es difícil obtener semigrupos con $\Delta\nu$ no constante e incluso con parte períodica no constante. 
%\textbf{Puedes poner todo lo que quieras en esta parte de tu software. Por ejemplo donde se ha ejecutado, como se ha implementado, etc.}

\subsubsection*{Acknowledgements}
The authors were partially supported by Junta de Andaluc\'{\i}a research group FQM-366 and by the project MTM2017-84890-P (MINECO/FEDER, UE), and the third author is partially supported by the project MTM2015-65764-C3-1-P (MINECO/FEDER, UE).

\bibliographystyle{unsrt}
\bibliography{ref}

%\printbibliography
% https://mathscinet.ams.org/mrlookup

\end{document}